\DeclareMathAlphabet{\mathbbold}{U}{BOONDOX-ds}{m}{n}
\DeclareMathOperator\supp{supp}
\DeclareMathOperator{\folner}{\text{F{\o}l}}
\newfont{\bbf}{msbm10 scaled\magstep1}
\newcounter{glob}[section]
\renewcommand\theglob{%
	\ifnum\arabic{section}=0\else\arabic{section}.\fi %
	\arabic{glob}}
\theoremstyle{plain}
\newtheorem{thm}[glob]{Theorem}
\newtheorem{lemma}[glob]{Lemma}
\newtheorem{cor}[glob]{Corollary}
\newtheorem{prop}[glob]{Proposition}
\newtheorem*{prop*}{Proposition}
\theoremstyle{definition}
\newtheorem{defi}[glob]{Definition}
\theoremstyle{remark}
\newtheorem{remark}[glob]{Remark}
\def\Z{\mathbb{Z}}
\def\R{\mathbb{R}}
\def\N{\mathbb{N}}
\pgfplotsset{width=8cm}
\title{\bf Coulhon  Saloff-Coste isoperimetric inequalities for finitely generated groups}
\author{Christophe Pittet}
\address{I2M, CNRS, AMU, and University of Geneva}
\email{pittet@math.cnrs.fr}
\author{Bogdan Stankov}
\address{D\'epartement de math\'ematiques et applications, ENS, CNRS, PSL University, Paris}
\email{bogdan.zl.stankov@gmail.com}
\thanks{The authors acknowledge support of the FNS grant 200020-200400.}
\date{\today}
\keywords{Amenable groups, isoperimetric inequality, F{\o}lner sets, growth, word metrics}
\subjclass[2020]{Primary: 20F65; Secondary: 60B15}
\begin{document}
\maketitle 
\begin{abstract}  We prove an inequality, valid on any finitely generated group with a fixed finite symmetric generating set, involving the growth of successive balls, and the average length of an element in a ball. It generalizes recent improvements of the Coulhon Saloff-Coste inequality. We reformulate the inequality in terms of the F{\o}lner function; in the case the finitely generated group is amenable with exponential growth, this allows us to express the best possible (outer) constant in the Coulhon Saloff-Coste isoperimetric inequality with the help of a formula involving the growth rate and the asymptotic behavior of the F{\o}lner function. 
\end{abstract}

\thispagestyle{empty}

\section{Introduction}
In many spaces satisfying a Poincaré inequality (like for example Lie groups with a left-invariant Riemannian metric, or more generally Riemannian manifolds with a cocompact group of isometries, or discrete spaces like Cayley graphs of finitely generated groups, etc.), Coulhon Saloff-Coste inequalities \cite{CouSal} bring a lower bound on the  measure of the boundary of \emph{any} measurable subset (with regular boundary) of (large) volume $V$, in term  of the radii of the balls of  volume $V$.

In Section \ref{birth}, we shortly recall why this kind of inequalities appeared to be crucial in the work of Varopoulos related to Kesten recurrence conjecture on finitely generated groups, and mention some of their links with geometric group theory. With the exception of the precise statement of the original inequality of Coulhon and Saloff-Coste in Section \ref{birth}, the next sections are independent of Section \ref{birth}. 

In Section \ref{main}, we state Theorem \ref{thmaveragegrowth} which is the main result of the paper. It is valid on any finitely generated group, it involves the growth of successive balls, and the average length of an element in a ball. Its proof is based on a ``mass transportation'' idea proposed by Gromov \cite[pages 347-348]{Gro}.  It generalizes recent improvements of the original Coulhon Saloff-Coste inequality for finitely generated groups, due to Correia \cite{Cor}, Correia and Troyanov \cite{CorTro}, and Pete \cite{Pet}. In Proposition \ref{equal}, we reformulate the inequality in terms of the F{\o}lner function. In Theorem \ref{quotient}, we explain how this reformulation allows us to apply a formula, expressing, for any given finitely generated amenable group $G$ of exponential growth with a fixed finite symmetric generating set $S$, the best possible (outer) constant $C(G,S)$, in the Coulhon Saloff-Coste isoperimetric inequality. In Theorem \ref{thmc0}, we mention a estimate  of $\inf C(G,S)$, where the infimum is taken over all couples $(G,S)$ where $G$ is a finitely generated group and $S$ is  a finite symmetric generating set of $G$. 

Section \ref{proofs} is devoted to the proofs.

\section{On the birth of the Coulhon Saloff-Coste inequality}\label{birth}

P\'{o}lya has proven \cite{Pol} that the simple random walk on the $d$-dimensional grid $\mathbb Z^d$ is transient if and only if $d\geq 3$. In other words, the return probabilities $$p^{\mathbb Z^d}_{2n}(x_0,x_0)$$ to a starting point $x_0$ after
$2n$ closest neighborhood random steps in $\mathbb Z^d$ satisfy
\[
	\sum_{n=0}^{\infty}p^{\mathbb Z^d}_{2n}(x_0,x_0)<\infty,
\]
if and only if $d\geq 3$. Using F{\o}lner isoperimetric characterization of amenability \cite{Fol}, Kesten has proven \cite{KesFull} that the simple random walk on a group $G$ generated by a finite symmetric set satisfies
\[
\exists C>1, \exists a>0 : \forall n\in\mathbb N,\, p^G_{2n}(x_0,x_0)<Ce^{-an},	
\]
if and only if $G$ is non-amenable. He also raised  questions about recurrent groups \cite{KesCong}, leading to the following conjecture: the simple random walk on a group $G$ generated by a finite symmetric set is recurrent if and only if $G$ contains a finite index subgroup isomorphic to $\mathbb Z^d$ with $0\leq d\leq 2$. Varopoulos (see \cite{Varo} and \cite{CSV}) proved that if the growth of a finitely generated group is bounded below by a polynomial of degree $d$, then the simple random walk on it satisfies
\[
\exists C>1 : \forall n\in\mathbb N,\, p^G_{2n}(x_0,x_0)<Cn^{-d/2}.	
\]
This, combined with  Gromov  polynomial growth theorem, solves Kesten  conjecture. Indeed, suppose $G$ is a finitely generated group, not virtually $\mathbb Z^d$ with $d\leq 2$. According to the strong version of Gromov  result \cite{Klei}, the growth of $G$ must be at least polynomial of degree $3$. Applying Varopoulos implication cited above, we see that
\[
\exists C>1 : \forall n\in\mathbb N,\, p^G_{2n}(x_0,x_0)<Cn^{-3/2},
\]
so that 
\[
	\sum_{n=0}^{\infty}p^{G}_{2n}(x_0,x_0)<\infty.
\]
This proves Kesten  conjecture. Isoperimetric inequalities are useful to understand, to prove, and to generalize Varopoulos implication. This line of thought opened by Varopoulos, has been followed by Coulhon and Saloff-Coste \cite{CouSal} who have proved
that on an infinite group $G$ generated by a finite symmetric set $S$, any non-empty finite subset $\Omega$ of $G$, satisfies
$$\frac{|\partial_S\Omega|}{|\Omega|}\geq\frac{1}{4|S|\Phi_S[2|\Omega|]},$$
where $\partial_S\Omega$ is the boundary of $\Omega$ relative to $S$, the function $\Phi_S$ is the generalized inverse of the growth of $G$ relative to $S$, and vertical lines around a set denote its cardinality. See Section \ref{main} below for the precise definitions. A short proof of Varopoulos implication based on the Coulhon Saloff-Coste inequality is obtain by applying inequalities (1.3) and (1.4) and the equivalence (1.5) from  \cite{BPS}. See also \cite{PitSal} for  relations between growth, isoperimetric profiles, and return probabilities. See  \cite{PitSalWreath}, \cite{ErsZhe}, \cite{BriZheng}, for examples of groups with these asymptotic invariants evaluated.

\section{Main results and questions}\label{main}
\subsection{Improving the Coulhon Saloff-Coste isoperimetric inequality}
Consider a couple $(G,S)$, where $G$ is a finitely generated group, and $S\subset G$  is a finite symmetric generating subset of $G$.
For any $x\in G$, its \emph{word norm associated to $S$} is
\[
|x|_S=\min\{n:\exists g_1,\dots,g_n\in S:x=\prod_{k=1}^ng_k\}.	
\]
The \emph{left-invariant word metric associated to $S$} is defined, for any $x,y\in G$, as
$$d_S(x,y)=|x^{-1}y|_S.$$
We denote by  $e\in G$ the identity element. 
The \emph{ball of radius $r$, centered at $e$, with respect to the metric $d_S$}, is 
$$B_S(r)=\{x\in G:|x|_S\leq r\}.$$
Notice that $B_S(0)=\{e\}$ (the empty product equals $e$, hence $|e|_S=0$), and $B_S(1)=S$.
We agree that $B_S(-r)$ is empty for any $r>0$.
If $\Omega$ is a subset of $G$, we denote by $|\Omega|$ its cardinality.
For any $r\geq 0$, we define the \emph{average length of the elements of $B(r)$} as
\[
	\mathbb E\left[|X_r|_S\right]=\frac{1}{|B_S(r)|}\sum_{g\in B_S(r)}|g|_S.
\]
(It can be thought of as the expectation of the random variable
\[
	\omega\mapsto|X_r(\omega)|_S,
\]
where  the law of $X_r$ is the uniform distribution on $B_S(r)$.)
We consider the \emph{generalized inverse}
$$\Phi_S[v]=\inf\{r:|B_S(r)|>v\},$$
\emph{of the growth function of $(G,d_S)$}.
Notice that locally, the function $r\mapsto|B_S(r)|$, never increases (it is upper semicontinuous and takes integral values only), and the infimum in the above definition (which is finite for all $v\in\R$ if and only if $G$ is infinite) is a minimum.
As $d_S$ takes only integral values, all the finite values of $\Phi_S$ are also integral.
The (inner) \emph{boundary of $\Omega$ with respect to $S$} is
$$\partial_S\Omega=\{x\in\Omega:\exists s\in S:xs\in G\setminus\Omega\}=\{x\in\Omega: d_S(x,G\setminus\Omega)=1\}.$$

\begin{thm}[Isoperimetric inequalities in terms of the average length and the growth of successive balls]\label{thmaveragegrowth}
Let $(G,S)$ be a finitely generated group with a  finite symmetric generating set.
Let $\alpha\geq 0$.
For any finite non-empty subset $\Omega$ of $G$, such that $\Phi_S[(1+\alpha)|\Omega|]<\infty$ (which is always the case if $G$ is infinite), we have:
$$\frac{|\partial_S\Omega|}{|\Omega|}\geq\frac{\alpha}{1+\alpha}\frac{|B_S(r-1)|}{|B_S(r)|}\frac{1}
{\mathbb E\left[|X_r|_S\right]},$$
where $r=\Phi_S[(1+\alpha)|\Omega|]$.
\end{thm}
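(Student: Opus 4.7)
The plan is to apply Gromov's mass-transportation idea by double-counting the pairs $(x,g) \in \Omega \times B_S(r)$ for which $xg \notin \Omega$. Setting $N(g) := |\{x \in \Omega : xg \notin \Omega\}| = |\Omega g \setminus \Omega|$, I would estimate $\sum_{g \in B_S(r)} N(g)$ from below by a translation-counting argument and from above by tracking where a fixed geodesic from $x$ to $xg$ last meets $\Omega$; combining the two with the defining inequalities of $r = \Phi_S[(1+\alpha)|\Omega|]$ will then yield the claim.

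For the lower bound, I would write $N(g) = |\Omega| - |\Omega \cap \Omega g^{-1}|$ and observe that $\sum_{g \in B_S(r)} |\Omega \cap \Omega g^{-1}|$ equals the number of ordered pairs $(x,y) \in \Omega^2$ with $d_S(x,y) \le r$, hence is at most $|\Omega|^2$. This yields
$$\sum_{g \in B_S(r)} N(g) \;\ge\; |\Omega|\bigl(|B_S(r)| - |\Omega|\bigr),$$
and the defining property $|B_S(r)| > (1+\alpha)|\Omega|$ makes the right-hand side strictly larger than $\alpha|\Omega|^2$.

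For the upper bound, I would fix once and for all a geodesic word $g = s_1(g) \cdots s_{|g|_S}(g)$ for every $g \in G$. For each pair $(x,g)$ with $x \in \Omega$ and $xg \notin \Omega$, let $k \in \{0,\ldots,|g|_S-1\}$ be the largest index such that $z := x\, s_1(g)\cdots s_k(g) \in \Omega$; by maximality $z\, s_{k+1}(g) \notin \Omega$, so $z \in \partial_S \Omega$. The key point is that for a fixed pair $(z,g) \in \partial_S \Omega \times B_S(r)$, each admissible breakpoint $k$ determines the preimage $x = z\,(s_1(g)\cdots s_k(g))^{-1}$ uniquely; as $k$ ranges over a set of at most $|g|_S$ values, this forces
$$\sum_{g \in B_S(r)} N(g) \;\le\; |\partial_S \Omega| \sum_{g \in B_S(r)} |g|_S \;=\; |\partial_S \Omega| \cdot |B_S(r)| \cdot \mathbb E\bigl[|X_r|_S\bigr].$$
Chaining the two bounds gives $|\partial_S \Omega|/|\Omega| \ge (|B_S(r)|-|\Omega|)/(|B_S(r)|\,\mathbb E[|X_r|_S])$, and invoking the second half of the definition of $r$, namely $|B_S(r-1)| \le (1+\alpha)|\Omega|$, to bound $|B_S(r)|-|\Omega| > \alpha|\Omega| \ge \tfrac{\alpha}{1+\alpha}|B_S(r-1)|$, produces the desired inequality. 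The main conceptual step is the prefix-counting upper bound that links each boundary point to at most $|g|_S$ admissible preimages; once the geodesics have been fixed the rest is straightforward bookkeeping.
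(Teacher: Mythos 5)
Your argument is correct and is essentially the paper's own proof: the same Gromov-style mass transportation, with the double count $\sum_{g\in B_S(r)}N(g)$ bounded above by assigning each pair $(x,g)$ a boundary point on a fixed geodesic (you take the last prefix landing in $\Omega$, the paper the first exit point, with the identical injectivity argument giving the factor $|g|_S$), and bounded below by the translation count together with $|B_S(r)|>(1+\alpha)|\Omega|$ and $|B_S(r-1)|\le(1+\alpha)|\Omega|$. The only cosmetic difference is that you organize the lower bound via $N(g)=|\Omega|-|\Omega\cap\Omega g^{-1}|$ rather than the paper's per-point ray count $|R_x^r(\Omega)|\ge\alpha|\Omega|$; these are the same identity read in the two orders of summation.
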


\begin{cor}[Isoperimetric inequalities in terms of the inverse of the growth of balls and the growth of successive balls]\label{thmgrowth}
Let $(G,S)$ be a finitely generated group with a  finite symmetric generating set.
Let $\alpha\geq 0$.
For any finite non-empty subset $\Omega$ of $G$, such that $\Phi_S[(1+\alpha)|\Omega|]<\infty$ (which is always the case if $G$ is infinite), we have:
$$\frac{|\partial_S\Omega|}{|\Omega|}\geq\frac{\alpha}{1+\alpha}\frac{|B_S(\Phi_S[(1+\alpha)|\Omega|]-1)|}{|B_S(\Phi_S[(1+\alpha)|\Omega|])|}\frac{1}{\Phi_S[(1+\alpha)|\Omega|]}.$$
\end{cor}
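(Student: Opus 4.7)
The plan is to derive the corollary as an immediate weakening of Theorem \ref{thmaveragegrowth}. Set $r = \Phi_S[(1+\alpha)|\Omega|]$, which is finite by hypothesis, so Theorem \ref{thmaveragegrowth} applies and yields
\[
\frac{|\partial_S\Omega|}{|\Omega|}\geq\frac{\alpha}{1+\alpha}\frac{|B_S(r-1)|}{|B_S(r)|}\frac{1}{\mathbb E\left[|X_r|_S\right]}.
\]
The only discrepancy between this bound and the one claimed in the corollary is the factor $1/\mathbb{E}[|X_r|_S]$ versus $1/r$.

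The key observation is the trivial upper bound $\mathbb{E}[|X_r|_S]\leq r$: every $g\in B_S(r)$ satisfies $|g|_S\leq r$ by definition of the ball, so averaging over $B_S(r)$ gives at most $r$. Consequently $1/\mathbb{E}[|X_r|_S]\geq 1/r$, and substituting into the inequality above produces exactly the statement of the corollary. No further argument is needed, so there is no real obstacle; the corollary is simply the coarser form of the main theorem obtained by replacing the average length of an element of $B_S(r)$ by the radius $r$ itself.
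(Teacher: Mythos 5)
Your proof is correct and is essentially identical to the paper's own argument: the corollary is deduced from Theorem \ref{thmaveragegrowth} by the trivial bound $\mathbb{E}[|X_r|_S]\leq r$. Nothing further is needed.
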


\begin{proof} The corollary follows from Theorem \ref{thmaveragegrowth} because for any $r\geq 0$,
	\[
		\mathbb E\left[|X_r|_S\right]=\frac{1}{|B_S(r)|}\sum_{g\in B_S(r)}|g|_S\leq r.
	\]
	
\end{proof}

As we will explain, the next statement, proved in \cite{CorTro}, follows from the above theorem and its proof: 

\begin{thm}[Isoperimetric inequalities with an outer multiplicative constant almost equal to $1$]\label{thmepsilon}
Let $(G,S)$ be a finitely generated group with a finite symmetric generating set.
Assume $G$ is infinite.
Let $0<\varepsilon<1$.
For any finite non-empty subset $\Omega$ of $G$ we have:
$$\frac{|\partial_S\Omega|}{|\Omega|}>(1-\varepsilon)\frac{1}{\Phi_S\left[\frac{1}{\varepsilon}|\Omega|\right]}.$$
\end{thm}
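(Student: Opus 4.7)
My plan is to revisit the mass transport estimate inside the proof of Theorem \ref{thmaveragegrowth} and exploit the strict inequality built into the definition of $\Phi_S$.

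First, I would isolate from that proof the intermediate inequality
\[
|\Omega|\bigl(|B_S(r)|-|\Omega|\bigr)\;\leq\; |\partial_S\Omega|\cdot|B_S(r)|\cdot\mathbb{E}\bigl[|X_r|_S\bigr],
\]
valid whenever $|B_S(r)|\geq|\Omega|$. The left-hand side is a lower bound for the number of pairs $(x,g)\in\Omega\times B_S(r)$ with $xg\notin\Omega$: for each $x\in\Omega$, the set $x^{-1}(G\setminus\Omega)\cap B_S(r)$ has cardinality at least $|B_S(r)|-|\Omega|$. The right-hand side is the mass-transport upper bound for the same count, obtained by charging every such pair, along a chosen geodesic from $x$ to $xg$, to the last element of $\Omega$ this geodesic visits, which lies in $\partial_S\Omega$; each $y\in\partial_S\Omega$ receives at most $\sum_{g\in B_S(r)}|g|_S=|B_S(r)|\,\mathbb{E}[|X_r|_S]$ pairs, one for each $g$ and each admissible position $j\in\{0,\dots,|g|_S-1\}$ along its geodesic. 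The factor $|B_S(r-1)|/|B_S(r)|$ present in Theorem \ref{thmaveragegrowth} only enters the picture after one weakens $|B_S(r)|-|\Omega|$ using $|B_S(r-1)|\leq(1+\alpha)|\Omega|$; for the present statement I would not perform this substitution.

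Next, I would take $r=\Phi_S[\tfrac{1}{\varepsilon}|\Omega|]$, which is finite because $G$ is infinite and is at least $1$ because $|\Omega|\geq 1>\varepsilon$. By the very definition of the generalized inverse as an infimum, $|B_S(r)|>\tfrac{1}{\varepsilon}|\Omega|$ strictly, so $|\Omega|/|B_S(r)|<\varepsilon$. Dividing the displayed inequality by $|\Omega|\cdot|B_S(r)|\cdot\mathbb{E}[|X_r|_S]$ and using the trivial bound $\mathbb{E}[|X_r|_S]\leq r$ yields
\[
\frac{|\partial_S\Omega|}{|\Omega|}\;\geq\;\frac{1-|\Omega|/|B_S(r)|}{\mathbb{E}[|X_r|_S]}\;>\;\frac{1-\varepsilon}{r}\;=\;(1-\varepsilon)\frac{1}{\Phi_S[\tfrac{1}{\varepsilon}|\Omega|]}.
\]

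The only real obstacle is one of bookkeeping: one has to verify that the mass-transport argument of Theorem \ref{thmaveragegrowth} already produces the "unrefined" inequality with $|B_S(r)|-|\Omega|$ on the left (rather than the cleaner form after the $|B_S(r-1)|$ substitution), and to notice that the strict inequality $|B_S(r)|>\tfrac{1}{\varepsilon}|\Omega|$ coming from $r=\Phi_S[\tfrac{1}{\varepsilon}|\Omega|]$ is precisely what promotes the conclusion from $\geq$ to $>$.
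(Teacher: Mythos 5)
Your proposal is correct and follows essentially the same route as the paper's proof: the paper likewise combines Lemmas \ref{lemmatransport}, \ref{lemmatwocount} and \ref{lemma35} to obtain $|B(r)|\,r\,|\partial\Omega|\geq\sum_{x\in\Omega}|R_x^r(\Omega)|\geq|\Omega|\left(|B(r)|-|\Omega|\right)$ with $r=\Phi_S[\tfrac{1}{\varepsilon}|\Omega|]$, and derives the strict inequality from $|B(r)|>\tfrac{1}{\varepsilon}|\Omega|$ exactly as you do (there phrased via $\alpha'=|B(r)|/|\Omega|-1>\alpha$). The only cosmetic difference is that the paper charges each pair to the first boundary point along the chosen geodesic rather than the last point of $\Omega$ it visits, which does not affect the count.
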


Choosing $\varepsilon=\frac{1}{2}$, we recover the following inequality, proved by Pete \cite[Theorem 5.11]{Pet} and Correia \cite{Cor}:

\begin{cor}[Improving the Coulhon Saloff-Coste inequality]\label{maincor}
Let $(G,S)$ be a finitely generated group with a finite symmetric generating set.
If $G$ is infinite, then for any non-empty finite subset $\Omega$ of $G$,
$$\frac{|\partial_S\Omega|}{|\Omega|}>1/2\frac{1}{\Phi_S[2|\Omega|]}.$$
\end{cor}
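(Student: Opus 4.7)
The plan is to invoke Theorem \ref{thmepsilon} with the specific value $\varepsilon = 1/2$. Since $1/2 \in (0,1)$, the hypothesis of that theorem is satisfied for every finite non-empty $\Omega$. Under this substitution, the outer constant $(1-\varepsilon)$ becomes $1/2$, while $1/\varepsilon$ becomes $2$, so $\Phi_S[\tfrac{1}{\varepsilon}|\Omega|] = \Phi_S[2|\Omega|]$. The inequality of Theorem \ref{thmepsilon} then reduces immediately to
$$\frac{|\partial_S\Omega|}{|\Omega|} > \frac{1}{2}\cdot \frac{1}{\Phi_S[2|\Omega|]},$$
which is exactly the claimed bound, with the strict inequality inherited directly.

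There is no side condition to verify beyond $\varepsilon \in (0,1)$, so the entire argument is a one-line specialization; the substantive content is already absorbed into Theorem \ref{thmepsilon}. It is worth noting why one cannot simply use Corollary \ref{thmgrowth} with $\alpha = 1$: that would yield only the weaker non-strict bound
$$\frac{|\partial_S\Omega|}{|\Omega|} \geq \frac{1}{2}\cdot \frac{|B_S(r-1)|}{|B_S(r)|}\cdot \frac{1}{\Phi_S[2|\Omega|]},$$
with $r = \Phi_S[2|\Omega|]$, which falls short by the ratio $|B_S(r-1)|/|B_S(r)| \leq 1$ and is moreover not strict. Absorbing this ratio while upgrading to a strict inequality is precisely what Theorem \ref{thmepsilon} accomplishes through a more careful analysis of the mass-transportation argument underlying Theorem \ref{thmaveragegrowth}; relative to that theorem, the main obstacle for the present corollary is nil, and no further work is required.
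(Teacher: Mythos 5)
Your proposal is correct and is exactly the paper's argument: the corollary is obtained by specializing Theorem \ref{thmepsilon} to $\varepsilon=1/2$, so that $1-\varepsilon=1/2$ and $\frac{1}{\varepsilon}|\Omega|=2|\Omega|$, with the strict inequality carried over directly. Your side remark on why Corollary \ref{thmgrowth} with $\alpha=1$ would not suffice is also accurate, though not needed for the proof.
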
 

The above inequality is an improvement of the original isoperimetric inequality of Coulhon and Saloff-Coste \cite[Théorème~1]{CouSal} stated above in the introduction.

\subsection{Searching optimal constants with the help of F{\o}lner functions}

\begin{defi}[The optimal multiplicative outer constant in the Coulhon Saloff-Coste inequality]
Let $(G,S)$ be a finitely generated group with a finite symmetric generating set.
We define 
$$C(G,S)=\sup\left\{c\geq 0:\exists \alpha\geq 0\, \mbox{such that}\,\forall\Omega\subset G,\frac{|\partial_S\Omega|}{|\Omega|}\geq c\frac{1}{\Phi_S[(1+\alpha)|\Omega|]}\right\},$$
where  $\Omega$ is always assumed to be finite and non-empty.
\end{defi}
We claim that $C(G,S)\geq 1$. Indeed: if $G$ is infinite, then  Theorem~\ref{thmepsilon} applies. If $G$ is finite, then $C(G,S)=\infty$ for any symmetric generating set (this is a tautology: choosing $\alpha$ larger than $|G|-1$, we see that, for any non-empty $\Omega$, there is no radius $r$ such that $|B(r)|>(1+\alpha)|\Omega|$; hence $\Phi_S[(1+\alpha)|\Omega|]$ is equal to the infimum of the empty set which is $\infty$).

\begin{defi}[F{\o}lner functions]\label{foldef} Let $(G,S)$ be a finitely generated group with a finite symmetric generating set.
The \textit{F{\o}lner function} $\folner_S$ of $G$ relative to $S$ is defined on $\N$ as

$$\folner_S(n)=\min\left\{|\Omega|:\Omega\subset G,\frac{|\partial_S\Omega|}{|\Omega|}\leq\frac{1}{n}\right\},$$
where $\Omega$ is always assumed to be finite and non-empty. Notice that $\folner_S(1)=1$.
\end{defi}

\begin{defi}[The optimal multiplicative inner constant in the volume growth lower bound of the F{\o}lner function]
Let $(G,S)$ be a finitely generated group with a finite symmetric generating set.
We define 
$$F(G,S)=\sup\left\{c\geq 0:\exists \alpha\geq 0, \exists \rho\geq 0\, \mbox{such that}\,\forall n\in\mathbb N,\folner_S(n)\geq\frac{|B_S(cn-\rho)|}{1+\alpha}\right\}.$$
\end{defi}


In Subsection \ref{proof of proposition equal} below, we prove the following equality:
\begin{prop}[Equality of the optimal multiplicative constants]\label{equal} Let $(G,S)$ be a finitely generated group with a finite symmetric generating set. Then $C(G,S)=F(G,S)$.
\end{prop}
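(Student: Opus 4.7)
The proposition amounts to two inequalities $F(G,S)\ge C(G,S)$ and $C(G,S)\ge F(G,S)$. If $G$ is finite both quantities equal $+\infty$ (the case of $C$ is already noted in the text; for $F$, choose $\alpha\ge|G|-1$, so that $|B_S(r)|/(1+\alpha)\le 1\le\folner_S(n)$ for every $r$ and $n$), so I assume $G$ infinite throughout.

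For $F(G,S)\ge C(G,S)$, let $c$ lie in the defining set of $C$, witnessed by some $\alpha\ge 0$. For every $n\in\mathbb N$ and every non-empty finite $\Omega$ with $|\partial_S\Omega|/|\Omega|\le 1/n$, the isoperimetric hypothesis rearranges to $\Phi_S[(1+\alpha)|\Omega|]\ge cn$; since $\Phi_S$ is integer-valued, in fact $\Phi_S[(1+\alpha)|\Omega|]\ge\lceil cn\rceil$. The defining property of the generalized inverse then gives $|B_S(\lceil cn\rceil-1)|\le(1+\alpha)|\Omega|$, and monotonicity of $|B_S(\cdot)|$ yields $|B_S(cn-1)|\le(1+\alpha)|\Omega|$. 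Taking the infimum over such $\Omega$ produces $\folner_S(n)\ge|B_S(cn-1)|/(1+\alpha)$, so $c$ lies in the defining set of $F$ with the same $\alpha$ and $\rho=1$.

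For $C(G,S)\ge F(G,S)$, let $c$ lie in the defining set of $F$ with witnesses $\alpha,\rho$, and fix any $c''\in(0,c)$; I will exhibit $\alpha'$ such that $c''$ lies in the defining set of $C$ with witness $\alpha'$, which, letting $c''\to c^-$, gives $C(G,S)\ge c$. Set $R=c''(c+\rho)/(c-c'')$ and $\alpha'=\max(\alpha,|B_S(R)|-1)$. Given a non-empty finite $\Omega$, put $\beta=|\partial_S\Omega|/|\Omega|>0$ and $n=\lfloor 1/\beta\rfloor\ge 1$. Since $\beta\le 1/n$, $\folner_S(n)\le|\Omega|$, and using $\alpha'\ge\alpha$,
\[
(1+\alpha')|\Omega|\ \ge\ (1+\alpha')\folner_S(n)\ \ge\ |B_S(cn-\rho)|.
\]
Integer-valuedness of $\Phi_S$ converts this to $\Phi_S[(1+\alpha')|\Omega|]\ge cn-\rho\ge c/\beta-c-\rho$, equivalently $\beta\cdot\Phi_S[(1+\alpha')|\Omega|]\ge c-(c+\rho)\beta$. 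If $\beta\le(c-c'')/(c+\rho)$, the right-hand side is $\ge c''$, so $\beta\ge c''/\Phi_S[(1+\alpha')|\Omega|]$. Otherwise $\beta>(c-c'')/(c+\rho)$; then the choice of $\alpha'$ gives $|B_S(R)|\le 1+\alpha'\le(1+\alpha')|\Omega|$, hence $\Phi_S[(1+\alpha')|\Omega|]\ge R$ and $c''/\Phi_S[(1+\alpha')|\Omega|]\le c''/R=(c-c'')/(c+\rho)<\beta$. The required isoperimetric inequality holds in both cases.

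The \emph{main obstacle} is this case split in the reverse direction: the additive shift $\rho$ in the Følner inequality does not translate into a clean $\rho$-free isoperimetric inequality for every $\Omega$, so one handles large $\Omega$ (small $\beta$) by the direct rearrangement above and small $\Omega$ (large $\beta$) by enlarging $\alpha$ to $\alpha'$, which forces $\Phi_S[(1+\alpha')|\Omega|]$ to be bounded below by a constant independent of $|\Omega|\ge 1$.
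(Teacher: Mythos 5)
Your proof is correct. The first direction ($F(G,S)\ge C(G,S)$) is essentially the paper's argument: evaluate the isoperimetric hypothesis on a set realizing $\folner_S(n)$, rearrange to $\Phi_S[(1+\alpha)|\Omega|]\ge cn$, and convert back through the definition of the generalized inverse. The second direction follows the same skeleton as the paper's (take $n\approx 1/\beta$ with $\beta=|\partial_S\Omega|/|\Omega|$, use $\folner_S(n)\le|\Omega|$, apply the F{\o}lner lower bound, and split into cases according to whether $\beta$ is small or large), but you handle the additive shift $\rho$ differently. The paper absorbs the discrepancy between $|B_S(cn-\rho)|$ and $|B_S(cn+c)|$ into a multiplicative factor $|S|^{\lceil\rho+c\rceil}$ via the successive-spheres estimate $b_r\le|S|\,b_{r-1}$ (Lemmas \ref{lemmaspheres} and \ref{lemmaballs}), which lets the \emph{same} constant $c$ pass from $\mathcal F(G,S)$ to $\mathcal C(G,S)$; this yields the set-level statement of Proposition \ref{CS equivalent to Folner}, of independent interest. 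You instead concede an arbitrarily small loss ($c''<c$) and enlarge $\alpha$ to $\alpha'=\max(\alpha,|B_S(R)|-1)$ so that $\Phi_S[(1+\alpha')|\Omega|]\ge R$ uniformly, which disposes of the large-$\beta$ case. Since $\mathcal C(G,S)$ and $\mathcal F(G,S)$ are down-sets, recovering only $[0,c)\subset\mathcal C(G,S)$ still gives equality of the suprema, which is all Proposition \ref{equal} asserts. What your route buys is the complete avoidance of the ball-growth lemmas (so no dependence on $|S|$ enters); what it gives up is the exact transfer of the constant $c$ and hence the quantitative content of Proposition \ref{CS equivalent to Folner}.
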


The above equality is potentially useful to compute $C(G,S)$ for finitely generated amenable groups of exponential growth, because for such groups,  there is a simple general asymptotic formula for $F(G,S)$. This is exploited in the next theorem.

\begin{thm}[A formula for $C(G,S)$ for groups of exponential growth]\label{quotient} Let $(G,S)$ be a finitely generated group with a finite symmetric generating set. If the growth of $G$ is exponential, in other words if 
\[
\lim_{n\to\infty}\frac{\ln\left(|B_S(n)|\right)}{n}>0,	
\]
then 
$$C(G,S)=\frac{\liminf_{n\to\infty}\frac{\ln(\folner_S(n))}{n}}{\lim_{n\to\infty}\frac{\ln\left(|B_S(n)|\right)}{n}}.$$
\end{thm}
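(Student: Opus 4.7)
The plan is to invoke Proposition~\ref{equal} to replace $C(G,S)$ by $F(G,S)$, and then show $F(G,S) = \varphi/\beta$, where $\beta := \lim_{n\to\infty} \ln|B_S(n)|/n$ (finite, and strictly positive by the exponential growth hypothesis) and $\varphi := \liminf_{n\to\infty} \ln\folner_S(n)/n$. Two asymptotic facts about balls will drive the proof: since $|B_S(n+m)|\leq|B_S(n)|\cdot|B_S(m)|$, the sequence $\ln|B_S(n)|$ is subadditive and Fekete's lemma gives $|B_S(n)| \geq e^{n\beta}$ for every $n\geq 0$; and from the definition of $\beta$ as a limit, for every $\delta>0$ there is a constant $C_\delta\geq 1$ with $|B_S(n)| \leq C_\delta e^{n(\beta+\delta)}$ for all $n\geq 0$. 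Both bounds extend, via the convention $|B_S(r)|=|B_S(\lfloor r\rfloor)|$ and $|B_S(r)|=0$ for $r<0$, to real arguments.

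For the inequality $F(G,S) \geq \varphi/\beta$, I fix $c < \varphi/\beta$ and pick $\varepsilon,\delta>0$ small enough that $\gamma := (\varphi-\varepsilon)-c(\beta+\delta) > 0$. By the definition of $\liminf$ there is $N$ with $\folner_S(n) \geq e^{n(\varphi-\varepsilon)}$ for all $n\geq N$. Choose $\rho > cN$, so that for every $n<N$ one has $cn-\rho<0$ and the target inequality $(1+\alpha)\folner_S(n) \geq |B_S(cn-\rho)|$ is trivial. For $n\geq N$, substituting the two ball estimates and taking logarithms reduces the target inequality to $\ln(1+\alpha)+n\gamma+\rho(\beta+\delta) \geq \ln C_\delta$, which holds uniformly in $n\geq N$ once $\alpha$ (equivalently $\rho$) is chosen sufficiently large a single time. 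Hence every $c<\varphi/\beta$ lies in the set defining $F(G,S)$.

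For the converse $F(G,S)\leq \varphi/\beta$, suppose $c\geq 0$ belongs to that set, witnessed by some $\alpha,\rho\geq 0$. The universal lower bound $|B_S(r)|\geq e^{(r-1)\beta}$ (valid for $r\geq 1$) combined with the assumed inequality yields $\folner_S(n) \geq e^{(cn-\rho-1)\beta}/(1+\alpha)$ for every $n$ large enough that $cn-\rho\geq 1$, whence $\ln\folner_S(n)/n \geq c\beta - [(1+\rho)\beta+\ln(1+\alpha)]/n$. Passing to $\liminf$ in $n$ gives $\varphi \geq c\beta$, and taking the supremum over admissible $c$ concludes. The only delicate point is the bookkeeping in the first direction: the $\liminf$ lower bound on $\folner_S(n)$ is only asymptotic, so the finitely many small values of $n$ are handled not by sharpening constants but by inflating $\rho$ until the right-hand side vanishes on that range.
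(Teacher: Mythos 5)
Your proposal is correct and follows essentially the same route as the paper: reduce to $F(G,S)$ via Proposition~\ref{equal}, then prove the two inequalities between $F(G,S)$ and the quotient of exponents using Fekete's lemma for the balls and the definition of $\liminf$ for the F{\o}lner function. The only differences are cosmetic bookkeeping choices (you inflate $\rho$ to void the inequality for small $n$ and use an explicit upper bound $C_\delta e^{n(\beta+\delta)}$ on ball volumes, whereas the paper takes $\rho=0$, absorbs the small $n$ into $\alpha$, and invokes Lemma~\ref{lemmaballs} to compare $|B_S(cn-\rho)|$ with $|B_S(cn)|$), so no further comparison is needed.
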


\begin{remark} Recall that for finitely generating group and finite symmetric generating set $(G,S)$, the sequence
\[
	n\mapsto\ln\left(|B_S(n)|\right),
\]
is sub-additive and bounded below. Hence Fekete  Lemma applies (see for example \cite[Proposition 3.2]{Man}) and implies that
\[
\lim_{n\to\infty}\frac{\ln\left(|B_S(n)|\right)}{n}=\inf_n\frac{\ln\left(|B_S(n)|\right)}{n}.	
\]
Notice that the limit is finite and non-negative. It is strictly positive if and only if the group has exponential growth.

Handling  the sequence $$n\mapsto \frac{\ln\folner_S(n)}{n}$$ requires some care. Firstly, the sequence may include infinite values (this is the case if and only if $G$ is non-amenable). Secondly, it may take a given finite value an infinite number of times, and containing oscillations with unbounded amplitudes. This is the case in \cite[Example~3.8(2)]{ErsZhe} for $\alpha=1$ and $\beta=2$, as we shortly explain.
Take a strictly increasing sequence of integer $(\eta_i)$ and a function $\tau(n)=n^\alpha$ for $n\in[\eta_{2j-1},\eta_{2j}]$ and $\tau(n)=n^\beta$ for $n\in[\eta_{2j},\eta_{2j+1}]$.
According to \cite[Example~3.8(2)]{ErsZhe} we obtain a finitely generated group with a finite symmetric generating set $(G,S)$, whose F{\o}lner function verifies, for all $n\in\mathbb N$, 
$$Cn\exp(n+\tau(n))\geq\folner_S(n)\geq\exp\left(\frac{1}{C}\left(n+\tau\left(n/C\right)\right)\right),$$
where $C>1$ is a constant.
Hence, if $n\in[\eta_{2j-1},\eta_{2j}]$, we have $$0\leq\frac{\ln\left(\folner(n)\right)}{n}\leq\frac{\ln(Cn)}{n}+1+\frac{\tau(n)}{n},$$ which is smaller than $3$ if $n$ is large enough.
On the other hand, if $n\in[\eta_{2j},\eta_{2j+1}]$, then $$\frac{\ln\left(\folner(n)\right)}{n}\geq\frac{1}{Cn}(n+\tau(n/C))=\frac{1}{C}+\frac{n}{C^3}.$$
The above comments explain why we have to consider
\[
\liminf_{n\to\infty}\frac{\ln(\folner_S(n))}{n}\in[0,\infty].	
\]
\end{remark}

\subsection{Questions}

In \cite{Stan}, it is proved that the lamplighter group $G$ with the switch-walk-switch generating set $S$, satisfies $C(G,S)\leq 2$.  Combining this fact with Theorem \ref{thmepsilon} we thus obtain:

\begin{thm}[Framing the optimal universal multiplicative outer constant in the Coulhon Saloff-Coste inequality]\label{thmc0}
The infimum $\inf{C(G,S)}$ over all couples $(G,S)$, where $G$ is a finitely generated group and $S$ is a finite symmetric generating set of $G$ satisfies
\[
	1\leq\inf_{(G,S)}{C(G,S)}\leq 2.
\]
\end{thm}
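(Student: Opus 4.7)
The statement is a direct combination of two ingredients already singled out in the excerpt, so the proof decomposes into two short and essentially independent parts: an application of Theorem~\ref{thmepsilon} for the lower bound, and a citation of the lamplighter example of \cite{Stan} for the upper bound. I would present it as a two-paragraph proof.

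For the left-hand inequality, fix a couple $(G,S)$. If $G$ is finite, the paragraph following the definition of $C(G,S)$ already records $C(G,S)=\infty$, so we may assume $G$ infinite and invoke Theorem~\ref{thmepsilon}. For each $\varepsilon\in(0,1)$, set $\alpha=1/\varepsilon-1\geq 0$ and $c=1-\varepsilon$. The strict inequality
$$\frac{|\partial_S\Omega|}{|\Omega|}>(1-\varepsilon)\frac{1}{\Phi_S[(1+\alpha)|\Omega|]}$$
from Theorem~\ref{thmepsilon} implies the non-strict inequality appearing in the definition of $C(G,S)$ with this choice of $c$ and $\alpha$. Hence $1-\varepsilon$ belongs to the set whose supremum defines $C(G,S)$, so $C(G,S)\geq 1-\varepsilon$. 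Letting $\varepsilon\to 0^+$ yields $C(G,S)\geq 1$, and taking the infimum over $(G,S)$ gives the lower bound.

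For the right-hand inequality, it suffices to produce a single couple $(G,S)$ witnessing $C(G,S)\leq 2$. This is exactly the content of the lamplighter example recalled at the beginning of the subsection: by \cite{Stan}, the lamplighter group with the switch-walk-switch generating set satisfies $C(G,S)\leq 2$. This single example gives $\inf_{(G,S)}C(G,S)\leq 2$, completing the proof.

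There is essentially no obstacle: the main technical work is already packaged in Theorem~\ref{thmepsilon} and in \cite{Stan}. The only point worth flagging is the passage from the strict inequality $>$ in Theorem~\ref{thmepsilon} to the non-strict inequality $\geq$ used in the definition of $C(G,S)$, which is automatic; and the bookkeeping that the change of variables $\alpha=1/\varepsilon-1$ indeed lies in the admissible range $\alpha\geq 0$.
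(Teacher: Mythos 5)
Your proposal is correct and follows exactly the route the paper intends: the lower bound via Theorem~\ref{thmepsilon} with $\alpha=1/\varepsilon-1$ (together with the observation that $C(G,S)=\infty$ for finite $G$), and the upper bound via the lamplighter example of \cite{Stan}. Nothing is missing; your remark on passing from the strict to the non-strict inequality is the only subtlety and you handle it correctly.
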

We neither know if there exists a finitely generated group $G$ with a finite symmetric generating set $S$ such that $C(G,S)<2$, nor if it is possible to improve the isoperimetric inequalities presented in this paper, substantially enough, to show that  
$1<\inf_{(G,S)}{C(G,S)}$.




\subsection{Acknowledgements}
We are grateful to Anna Erschler who encouraged us to write down these results.

\section{Proofs}\label{proofs}
\subsection{Preliminary to the proofs: growth of balls and mass transportation.}

When a finite generating symmetric set $S$ is fixed we will sometimes drop the subscript $S$ in the notation $\partial_S\Omega$, $d_S(x,y)$, et caetera.
Let
$$b_r=|B(r)|,s_r=|B(r)\setminus B(r-1)|$$
be the cardinality of the ball, respectively of the sphere, of radius $r$.

\begin{lemma}[Successive spheres]\label{lemmaspheres}
	If $r\geq2$ then $s_r\leq(|S|-1)s_{r-1}$.
\end{lemma}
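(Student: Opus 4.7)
The plan is to use a double-counting argument on the set of edges in the Cayley graph that join the sphere of radius $r-1$ to the sphere of radius $r$. Write $\Sigma_r := B(r)\setminus B(r-1)$, so $s_r = |\Sigma_r|$, and set
\[
E_r = \{(y,s) \in \Sigma_{r-1} \times S : ys \in \Sigma_r\}.
\]
I will bound $|E_r|$ from above in terms of $s_{r-1}$ and from below in terms of $s_r$.

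For the lower bound, every $x \in \Sigma_r$ has a geodesic expression $x = g_1 \cdots g_r$ with $g_i \in S$. Setting $y = g_1 \cdots g_{r-1}$ we get $|y|_S \leq r-1$, and $|y|_S \geq r-1$ by the triangle inequality (otherwise $|x|_S < r$), so $y \in \Sigma_{r-1}$; and $x = y g_r$. Thus $(y, g_r) \in E_r$ maps to $x$, showing $|E_r| \geq s_r$.

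For the upper bound, here is the key step, and the reason the hypothesis $r \geq 2$ enters. Fix $y \in \Sigma_{r-1}$. Since $r-1 \geq 1$, a geodesic word $y = g_1 \cdots g_{r-1}$ has at least one letter; let $s_0 = g_{r-1}^{-1}$, which lies in $S$ by symmetry. Then $y s_0 = g_1 \cdots g_{r-2}$ has word length at most $r-2 < r$, so $y s_0 \notin \Sigma_r$. Therefore at most $|S|-1$ choices of $s \in S$ satisfy $ys \in \Sigma_r$, giving
\[
|E_r| \leq (|S|-1)\, s_{r-1}.
\]
Combining the two bounds yields $s_r \leq (|S|-1)\, s_{r-1}$, as desired.

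There is no real obstacle; the only subtlety is ensuring we can produce an element of $S$ that strictly decreases the word length of any non-identity element, which is where symmetry of $S$ and the condition $r \geq 2$ (so that elements of $\Sigma_{r-1}$ are non-trivial) are used.
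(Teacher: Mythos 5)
Your proof is correct and is essentially the same argument as the paper's: both hinge on the observation that every $y$ in the sphere of radius $r-1$ has at least one $S$-neighbour of word length $r-2$ (using symmetry of $S$ and $r\geq 2$), so at most $|S|-1$ of its $|S|$ neighbours can lie in the sphere of radius $r$. The paper packages this as a bound on the fibers of a chosen projection $p_r:S(r)\to S(r-1)$, while you double count the edge set $E_r$; the two bookkeeping schemes are interchangeable.
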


\begin{proof}For each point $x\in S(r)$ we choose a point $p_r(x)\in S(r-1)$ such that $d(x,p_r(x))=1$.
Each fiber of the map $p_r:S(r)\mapsto S(r-1)$ contains at most $|S|-1$ points.
Indeed, as $r\geq2$, $p_{r-1}p_r (x)=z$ for some $z\in S(r-2)$ and there exists $t\in S$ such that $zt=p_r(x)$.
Hence, if $y\in p_r^{-1}(p_r(x))$, then $y=p_r(x)s$ for some $s\in S\setminus\{t\}$.
\end{proof}

\begin{lemma}[Growth of balls]\label{lemmaballs}
	If $r\geq2$ then $b_r\leq|S|b_{r-1}.$
\end{lemma}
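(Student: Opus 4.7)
The plan is to deduce this directly from the preceding Lemma~\ref{lemmaspheres} on successive spheres, together with the trivial decomposition of a ball into the previous ball and the current sphere. Concretely, the starting identity is
\[
b_r = b_{r-1} + s_r,
\]
valid for every $r\geq 1$, since $B(r)$ is the disjoint union of $B(r-1)$ and $B(r)\setminus B(r-1)$.

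Next, for $r\geq 2$ I would apply Lemma~\ref{lemmaspheres} to bound $s_r\leq (|S|-1)s_{r-1}$, and then use the obvious inclusion $B(r-1)\setminus B(r-2)\subseteq B(r-1)$, i.e.\ $s_{r-1}\leq b_{r-1}$. Combining these yields
\[
b_r = b_{r-1} + s_r \leq b_{r-1} + (|S|-1)s_{r-1} \leq b_{r-1} + (|S|-1)b_{r-1} = |S|\,b_{r-1}.
\]

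There is essentially no obstacle; the only subtlety worth flagging is that Lemma~\ref{lemmaspheres} requires $r\geq 2$ (so that its proof can invoke the point $p_{r-1}p_r(x)\in S(r-2)$), which matches the hypothesis in the present statement. For $r=1$ the inequality $b_1\leq |S|b_0=|S|$ would still hold trivially since $B(1)=\{e\}\cup S$ has at most $1+|S|$ elements, but this regime is not needed here.
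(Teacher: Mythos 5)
Your proof is correct and essentially identical to the paper's: both start from the decomposition $b_r=b_{r-1}+s_r$, invoke Lemma~\ref{lemmaspheres} to get $s_r\leq(|S|-1)s_{r-1}$, and then bound $s_{r-1}\leq b_{r-1}$. Only your parenthetical aside about $r=1$ is off — if $e\notin S$ then $b_1=|\{e\}\cup S|=1+|S|>|S|=|S|\,b_0$, so the inequality can fail there — but, as you note, that case lies outside the lemma's hypothesis and is not needed.
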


\begin{proof}
Lemma~\ref{lemmaspheres} implies that
\begin{equation*}
\begin{split}
b_r & =b_{r-1}+s_r\leq b_{r-1}+(|S|-1)s_{r-1} \\
	& =b_{r-1}\left(1+(|S|-1)\frac{s_{r-1}}{b_{r-1}}\right) \\
	& \leq b_{r-1}(1+(|S|-1))=b_{r-1}|S|.
	\end{split}
\end{equation*}
\end{proof}

\begin{defi}[{Mass transportation, compare \cite[pages~347-348]{Gro}}]
Let $\Omega\subset G$.
Let $g\in G$.
Let

$$\Omega_g=\{x\in\Omega:xg\in G\setminus\Omega\}.$$

Let
$$g=s_1\cdots s_n$$
with $s_1,\dots,s_n\in S$ and $n=d(e,g)$.
Let $g_0=e$ and for $1\leq k\leq n$, let
$$g_k=s_1\cdots s_k.$$
The first exit point of $x\in\Omega_g$ with respect to the path $(g_k)_{0\leq k\leq n}$ is
$$E_g(x)=g_k$$
where $0\leq k\leq n$ is minimal such that $g_k\in\partial\Omega$.
\end{defi}
Notice that $E_g$ depends not only on $g$ but also on the chosen minimal length expression $s_1\cdots s_n$.
Nevertheless we will simply write $E_g$.

\begin{lemma}[{See \cite[pages~347-348]{Gro}}]\label{lemmatransport}
Let $g\in G$.
Let $\Omega$ be a finite subset of $G$.
We have:
$$|\Omega_g|\leq|g||\partial\Omega|.$$
\end{lemma}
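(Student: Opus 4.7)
The strategy is the standard \emph{mass transport} idea: interpret $E_g$ as a map $\Omega_g \to \partial\Omega$ (sending $x$ to its first exit point $xg_{k(x)}$) and bound the size of its fibers by $|g|$. To see that this map is well-defined with image in $\partial\Omega$, fix $x\in\Omega_g$; the sequence $xg_0=x,\,xg_1,\dots,xg_n=xg$ starts in $\Omega$ and ends in $G\setminus\Omega$, so there is a smallest $k\in\{0,1,\dots,n-1\}$ with $xg_k\in\Omega$ while $xg_{k+1}=(xg_k)s_{k+1}\notin\Omega$. By the definition of the inner boundary this means $xg_k\in\partial\Omega$, so the exit index $k(x)$ lies in $\{0,1,\dots,n-1\}$, a set of cardinality $n=|g|$.

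Next, I would estimate the size of each fiber of $E_g$. If $y=xg_{k(x)}$, then $x=y\,g_{k(x)}^{-1}$, so $x$ is determined by $y$ together with the integer $k(x)$, which takes at most $n$ values. Hence $|E_g^{-1}(y)|\leq n$ for each $y\in\partial\Omega$, and summing gives
\[
|\Omega_g|=\sum_{y\in E_g(\Omega_g)}|E_g^{-1}(y)|\leq n\,|\partial\Omega|=|g|\,|\partial\Omega|,
\]
which is the desired inequality.

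There is no real obstacle in the argument; the only point that deserves attention is the range of $k$. It is crucial that $\partial\Omega\subset\Omega$ while $xg_n=xg$ lies outside $\Omega$, which forces the first exit index to be at most $n-1$, so one recovers the sharp factor $|g|$ rather than $|g|+1$. Note that $E_g$ depends on the chosen minimal-length factorization of $g$, but the bound holds for any such choice.
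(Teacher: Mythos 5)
Your proof is correct and follows essentially the same route as the paper: both bound the fibers of the first-exit map $E_g:\Omega_g\to\partial\Omega$ by $|g|$, using that $x$ is recovered from $E_g(x)$ and the exit index $k(x)\in\{0,\dots,n-1\}$. Your version is if anything slightly more explicit about why the exit index exists and is at most $n-1$.
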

\begin{proof}
Consider the map
$$E_g:\Omega_g\mapsto\partial\Omega.$$
To prove the lemma it is enough to show the following.
For any $b\in\partial\Omega$,
$$|E_g^{-1}(b)|\leq|g|.$$
To prove this inequality, let $x\in\Omega_g$ such that $E_g(x)=b$ (if such an $x$ does not exist the inequality is trivially true).
By definition, $xg_{k(x)}=b$.
If $y\in\Omega_g$ is such that $E_g(y)=b$, then
$$y=xg_{k(x)}g_{k(y)}^{-1}$$
is completely determined by the value of $0\leq k(y)\leq n-1$.
\end{proof}

For each $x\in\Omega$ and each $r>0$, we define
$$R_x^r(\Omega)=\{(x,g):g\in G, xg\in G\setminus\Omega, |g|\leq r\}.$$
(It is convenient to visualize an element of $R_x^r(\Omega)$ as a ray from $x\in\Omega$ to a point of $G$ which is not in $\Omega$ and lays at distance at most $r$ from $x$.)

\begin{lemma}[Two ways of counting]\label{lemmatwocount}
For any $r>0$,
$$\sum_{x\in\Omega}|R_x^r(\Omega)|=\sum_{g\in B(r)}|\Omega_g|.$$
\end{lemma}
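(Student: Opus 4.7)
The plan is a straightforward Fubini-style double counting argument: both sides of the claimed equality enumerate the same set of pairs.

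Concretely, I would introduce the set
$$T=\{(x,g)\in\Omega\times B(r):xg\in G\setminus\Omega\},$$
and compute $|T|$ in two ways by partitioning it first along the first coordinate and then along the second coordinate.

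First, I would fix $x\in\Omega$. By definition of $R_x^r(\Omega)$, the pairs in $T$ whose first coordinate equals $x$ are exactly the elements of $R_x^r(\Omega)$, since the condition ``$g\in B(r)$'' is the same as ``$|g|\leq r$'', and the condition ``$xg\in G\setminus\Omega$'' is identical on both sides. Summing over $x\in\Omega$ gives
$$|T|=\sum_{x\in\Omega}|R_x^r(\Omega)|.$$
Next, I would fix $g\in B(r)$. The pairs in $T$ whose second coordinate equals $g$ are those $(x,g)$ with $x\in\Omega$ and $xg\in G\setminus\Omega$, which is precisely the defining condition for $x\in\Omega_g$. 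Summing over $g\in B(r)$ yields
$$|T|=\sum_{g\in B(r)}|\Omega_g|.$$
Equating the two expressions gives the desired identity.

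There is really no obstacle here: the only thing to be careful about is the bookkeeping between the pair $(x,g)\in R_x^r(\Omega)$ (where $x$ is held fixed) and the pair $(x,g)\in T$, but this is purely notational since the set $R_x^r(\Omega)$ is indexed so that its cardinality counts exactly the admissible $g$'s for the given $x$.
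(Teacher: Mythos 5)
Your proof is correct and is essentially the paper's argument: the paper exhibits a bijection $x_g\mapsto(x_g,g)$ between $\bigsqcup_{g\in B(r)}\Omega_g$ and $\bigcup_{x\in\Omega}R_x^r(\Omega)$, which is exactly your double count of the pair set $T$, partitioned once by the second coordinate and once by the first. Your phrasing via $T$ is just a cleaner packaging of the same idea.
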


\begin{proof}
We claim that the map
\begin{gather*}
\bigsqcup_{g\in B(r)}\Omega_g\rightarrow\bigcup_{x\in\Omega}R_x^r(\Omega) \\
x_g\mapsto(x_g,g)
\end{gather*}
is a bijection.
Notice that $\Omega_g\cap\Omega_h$ may be nonempty for some $g\neq h$ in $G$ - this is why we consider the disjoint union of the sets $(\Omega_g)_{g\in G}$ - whereas $R_x^r(\Omega)\cap R_y^r(\Omega)=\emptyset$ if $x\neq y$.
The above map is well defined because if $x_g\in\Omega_g$ with $g\in B(r)$, then $x_gg\in G\setminus\Omega$, hence $(xg,g)\in R_{x_g}^r(\Omega)$.
The map is obviously one-to-one and any element of $\bigcup_{x\in\Omega}R_x^r(\Omega)$ is obviously of the form $(x_g,g)$ with $x_g\in\Omega_g$ and $g\in B(r)$.
Hence the map is also onto.
\end{proof}

\begin{lemma}\label{lemma35}
Let $\alpha\geq 0$ and $\Omega\subset G$.
Assume that there exists $r$ such that $|B(r)|\geq(\alpha+1)|\Omega|$ (this hypothesis is fulfilled for any $\alpha\geq 0$ and any finite $\Omega$, in the case where $G$ is infinite).
Then
$$\alpha|\Omega|\leq|R_x^r(\Omega)|.$$
\end{lemma}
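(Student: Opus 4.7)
The plan is to fix an arbitrary $x \in \Omega$ (the statement as written holds pointwise in $x$) and bound $|R_x^r(\Omega)|$ directly, using left-translation by $x$ as the main tool. Since
\[
R_x^r(\Omega) = \{(x,g) : g \in B(r),\ xg \notin \Omega\},
\]
its cardinality equals $|\{g \in B(r) : xg \notin \Omega\}|$. So it suffices to produce a good upper bound for the complementary set $N_x := \{g \in B(r) : xg \in \Omega\}$ and then subtract from $|B(r)|$.

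First, I would observe that the map $g \mapsto xg$ is a bijection from $G$ to itself. Restricted to $N_x$, this map lands inside $\Omega$ by construction, and it is still injective. Hence $|N_x| \leq |\Omega|$.

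Next, partitioning $B(r)$ as the disjoint union of $N_x$ and $\{g \in B(r) : xg \notin \Omega\}$, I get
\[
|R_x^r(\Omega)| = |B(r)| - |N_x| \geq |B(r)| - |\Omega|.
\]
Finally, invoking the hypothesis $|B(r)| \geq (\alpha+1)|\Omega|$ gives
\[
|R_x^r(\Omega)| \geq (\alpha+1)|\Omega| - |\Omega| = \alpha|\Omega|,
\]
which is the claim. There is no real obstacle: the entire argument is a one-line volume-comparison once one notes that left translation by $x$ embeds $N_x$ into $\Omega$. The remark about $G$ being infinite only ensures that some admissible $r$ exists for arbitrary $\alpha \geq 0$; when $\Omega$ is finite and $G$ infinite, such an $r$ is automatic because balls grow unboundedly.
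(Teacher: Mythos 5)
Your proof is correct and follows essentially the same route as the paper's: both arguments come down to the volume comparison $|R_x^r(\Omega)| \geq |B(r)| - |\Omega| \geq \alpha|\Omega|$, using the injectivity of left translation by $x$ (you phrase it inside $B(r)$ via the set $N_x$, the paper phrases it inside $xB(r)$ via the decomposition into $xB(r)\cap\Omega$ and $xB(r)\cap(G\setminus\Omega)$, which is the same thing). No gaps.
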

Remark that for $r=\Phi[(1+\alpha)|\Omega|]$, we have $|B(r)|>(\alpha+1)|\Omega|$.
\begin{proof}
The map
\begin{gather*}
R_x^r(\Omega)\rightarrow xB(r)\cap(G\setminus\Omega) \\
(x,g)\mapsto xg
\end{gather*}
is well defined because if $(x,g)\in R_x^r(\Omega)$ then $xg\in G\setminus\Omega$ and $|g|\leq r$.
It is onto because any element of $xB(r)\cap(G\setminus\Omega)$ can be written $xg$ with $|g|\leq r$.
Hence:
$$|R_x^r(\Omega)|\geq|xB(r)\cap(G\setminus\Omega)|.$$
Applying the hypothesis of the lemma and the above inequality, we obtain
\begin{equation*}
\begin{split}
|\Omega|+\alpha|\Omega| & \leq|xB(r)| \\
& =|xB(r)\cap\Omega|+|xB(r)\cap(G\setminus\Omega)|\\
& \leq|\Omega|+|R_x^r(\Omega)|.
\end{split}
\end{equation*}
This proves the lemma.
\end{proof}

\begin{lemma}\label{lemmaconclude}
Let $\alpha\geq 0$.
Let $\Omega$ be a finite subset of $G$.

Assume that $r=\Phi[(1+\alpha)|\Omega|]$ is finite.
Then for any $\varepsilon>0$,
\[	
|R_x^r(\Omega)|\geq\frac{\alpha}{1+\alpha}|B(r-\varepsilon)|.
\]
\end{lemma}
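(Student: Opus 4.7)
The plan is to deduce this lemma directly from Lemma~\ref{lemma35} together with the defining property of the generalized inverse $\Phi$. Since $r = \Phi[(1+\alpha)|\Omega|]$ is finite by hypothesis, in particular there exists a radius at which the ball cardinality exceeds $(1+\alpha)|\Omega|$, so Lemma~\ref{lemma35} applies and yields
\[
|R_x^r(\Omega)| \geq \alpha|\Omega|.
\]
This handles the ``$\alpha|\Omega|$'' half of the bound we want; it remains to trade $|\Omega|$ for $|B(r-\varepsilon)|/(1+\alpha)$.

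For this trade, I would invoke the definition
\[
\Phi[(1+\alpha)|\Omega|] = \inf\{r' : |B(r')| > (1+\alpha)|\Omega|\}.
\]
Since $r$ is the infimum of this set, any strictly smaller radius must fail the strict inequality; that is, for every $r' < r$ we have $|B(r')| \leq (1+\alpha)|\Omega|$. Applying this observation with $r' = r - \varepsilon$ (which is strictly less than $r$ precisely because $\varepsilon > 0$) gives
\[
|B(r-\varepsilon)| \leq (1+\alpha)|\Omega|, \quad \text{equivalently} \quad |\Omega| \geq \frac{|B(r-\varepsilon)|}{1+\alpha}.
\]

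Chaining the two inequalities finishes the proof:
\[
|R_x^r(\Omega)| \geq \alpha|\Omega| \geq \frac{\alpha}{1+\alpha}\,|B(r-\varepsilon)|.
\]
There is no genuine obstacle here; the only point that requires a line of care is the strict-versus-non-strict nature of the infimum defining $\Phi$, which is exactly what forces the hypothesis $\varepsilon > 0$ (so that $r - \varepsilon$ lies strictly below the threshold $r$ and thus inherits the non-strict bound $|B(r-\varepsilon)| \leq (1+\alpha)|\Omega|$). Note that for $0 < \varepsilon \leq 1$ the inequality specializes to $|R_x^r(\Omega)| \geq \frac{\alpha}{1+\alpha}|B(r-1)|$, which is the form that will feed into the proof of Theorem~\ref{thmaveragegrowth} via Lemma~\ref{lemmatwocount} and Lemma~\ref{lemmatransport}.
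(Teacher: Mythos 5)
Your proof is correct and follows the same route as the paper's, which simply cites Lemma~\ref{lemma35} together with the definition of $r$ and writes the chained inequality $|R_x^r(\Omega)|\geq\alpha|\Omega|\geq\frac{\alpha}{1+\alpha}|B(r-\varepsilon)|$ in one line. You have merely made explicit the two points the paper leaves implicit: that finiteness of $r$ guarantees the hypothesis of Lemma~\ref{lemma35}, and that the infimum defining $\Phi$ forces $|B(r-\varepsilon)|\leq(1+\alpha)|\Omega|$ for every $\varepsilon>0$.
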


\begin{proof}
According to Lemma~\ref{lemma35} and the definition of $r$,
$$|R_x^r(\Omega)|\geq\alpha|\Omega|\geq\frac{\alpha}{1+\alpha}|B(r-\varepsilon)|.$$
\end{proof}

\subsection{Proofs of Theorems \ref{thmaveragegrowth} and \ref{thmepsilon}}
We begin with the proof of Theorem~\ref{thmaveragegrowth}.
\begin{proof}
Let $\alpha\geq 0$.
Let $\Omega$ be a non-empty finite subset of $G$. Notice that $$\Phi[(1+\alpha)|\Omega|]>0.$$ We assume that
$\Phi[(1+\alpha)|\Omega|]$
is finite. We define $r=\Phi[(1+\alpha)|\Omega|]$.
We now apply successively the definition of $\mathbb E[|X_r|]|$,  Lemma~\ref{lemmatransport}, Lemma~\ref{lemmatwocount}, and Lemma~\ref{lemmaconclude}:
\begin{align*}
|B(r)|\mathbb E[|X_r|]|\partial\Omega|&=\sum_{g\in B(r)}|g||\partial\Omega|\geq\sum_{g\in B(r)}|\Omega_g|\\
&=\sum_{x\in\Omega}|R_x^r(\Omega)|\\
&\geq\frac{\alpha}{1+\alpha}|\Omega||B(r-1)|.
\end{align*}
This proves the theorem.
\end{proof}

We then prove Theorem~\ref{thmepsilon}.

\begin{proof}
Let $\alpha=\frac{1-\varepsilon}{\varepsilon}$. Equivalently,
$\frac{\alpha}{1+\alpha}=1-\varepsilon$.
We choose $r=\Phi[(1+\alpha)|\Omega|]$, which is finite by hypothesis.
Let
$\alpha'=\frac{|B(r)|}{|\Omega|}-1$. Equivalently,
$|B(r)|=(1+\alpha')|\Omega|$.
Notice that the definition of $\Phi$ implies that $\alpha'>\alpha$.
We apply successively Lemma~\ref{lemmatransport}, Lemma~\ref{lemmatwocount}, Lemma~\ref{lemma35}:
\begin{equation*}
\begin{split}
|B(r)|r|\partial\Omega| & \geq\sum_{g\in B(r)}|g||\partial\Omega|\geq\sum_{g\in B(r)}|\Omega_g|=\sum_{x\in\Omega}|R_x^r(\Omega)| \\
& \geq\sum_{x\in\Omega}\alpha'|\Omega|=\sum_{x\in\Omega}\frac{\alpha'}{1+\alpha'}|B(r)|>\frac{\alpha}{1+\alpha}|\Omega||B(r)|.
\end{split}
\end{equation*}
This proves the theorem.
\end{proof}

\subsection{The Coulhon Saloff-Coste inequality in terms of the F{\o}lner function}
\begin{prop}[The Coulhon Saloff-Coste inequality in terms of the F{\o}lner function]\label{CS equivalent to Folner} Let $(G,S)$ be a finitely generated group with a finite symmetric generating set.
\begin{enumerate}
	\item Let $c>0$ and $\alpha\geq 0$. Assume that for any finite non-empty subset $\Omega$ of $G$,
	\[
	\frac{|\partial_S\Omega|}{|\Omega|}\geq c\frac{1}{\Phi_S[(1+\alpha)|\Omega|]}.	
	\]
	Then, for any  stricly positive $\rho>0$, for all $n\in\mathbb N$, 
	$$\folner_S(n)\geq\frac{|B_S(cn-\rho)|}{1+\alpha}.$$
	\item Assume $S\neq\emptyset$ (the case $(G,S)=(\{e\},\emptyset)$ is trivial). Let $c>0$, $\alpha\geq 0$, and $\rho\geq 0$. Assume that for all $n\in\mathbb N$,
	$$\folner_S(n)\geq\frac{|B_S(cn-\rho)|}{1+\alpha}.$$
	Then, for all finite non-empty subset $\Omega$ of $G$,
	\[
	\frac{|\partial_S\Omega|}{|\Omega|}\geq c\frac{1}{\Phi_S\left[|S|^{\left \lceil{\rho+c}\right \rceil}(1+\alpha)|\Omega|\right]},	
	\]
	where $\left \lceil{\rho+c}\right \rceil$ is the smallest integral upper bound of $\rho+c$. 
\end{enumerate}
\end{prop}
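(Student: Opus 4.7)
The plan is to unwind the definitions of $\Phi_S$ and $\folner_S$, with Lemma~\ref{lemmaballs} interpolating between the radii $cn-\rho$ appearing on the F{\o}lner side and $c/\beta$ appearing on the Coulhon Saloff-Coste side.

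For part (1), fix $n\in\mathbb N$ and let $\Omega$ be any finite non-empty subset of $G$ with $|\partial_S\Omega|/|\Omega|\leq 1/n$; if no such $\Omega$ exists then $\folner_S(n)=\infty$ and the claim is vacuous. The hypothesis yields $\Phi_S[(1+\alpha)|\Omega|]\geq cn$, so by the definition of the generalized inverse, $|B_S(r)|\leq(1+\alpha)|\Omega|$ for every real $r<cn$. Applying this to $r=cn-\rho$ (which is strictly less than $cn$ precisely because $\rho>0$) gives $|\Omega|\geq|B_S(cn-\rho)|/(1+\alpha)$, and taking the infimum over admissible $\Omega$ proves the claim.

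For part (2), fix a non-empty finite $\Omega$ and set $\beta=|\partial_S\Omega|/|\Omega|$. The degenerate case $\beta=0$ forces $\Omega=G$, so $G$ is finite and $\Phi_S[K|\Omega|]=\infty$ for any $K\geq 1$; both sides then vanish. Otherwise $0<\beta\leq 1$, and setting $n:=\lfloor 1/\beta\rfloor\geq 1$ gives $\beta\leq 1/n$. The definition of $\folner_S$ combined with the hypothesis then yields $(1+\alpha)|\Omega|\geq|B_S(cn-\rho)|$. It remains to show $\Phi_S[K|\Omega|]\geq c/\beta$ for $K=|S|^{\lceil\rho+c\rceil}(1+\alpha)$, i.e., $|B_S(r)|\leq K|\Omega|$ for every real $r<c/\beta$. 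Because $1/\beta<n+1$, any such $r$ satisfies $r<cn+c$, so the integer $\lfloor r\rfloor-\lfloor cn-\rho\rfloor$ is at most $\lceil\rho+c\rceil$; iterating Lemma~\ref{lemmaballs} that many times gives $|B_S(r)|\leq|S|^{\lceil\rho+c\rceil}|B_S(cn-\rho)|\leq K|\Omega|$.

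The main piece requiring attention is this last step: floor and ceiling bookkeeping must be done carefully so that precisely $\lceil\rho+c\rceil$ applications of Lemma~\ref{lemmaballs} suffice, and the degenerate regime $cn-\rho<0$ (where the F{\o}lner hypothesis becomes vacuous) must be handled separately. In that regime one has $r<\rho+c$, and the crude bound $|B_S(r)|\leq|S|^{\lfloor r\rfloor}\leq|S|^{\lceil\rho+c\rceil}$ is still absorbed into $K|\Omega|$ because $(1+\alpha)|\Omega|\geq 1$. All other steps are routine unwinding of the definitions.
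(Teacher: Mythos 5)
Your proof is correct and follows essentially the same route as the paper's: part (1) is the same unwinding of the definitions of $\folner_S$ and $\Phi_S$, and part (2) uses the same integer $n$ with $1/(n+1)<|\partial_S\Omega|/|\Omega|\leq 1/n$, the F{\o}lner lower bound $(1+\alpha)|\Omega|\geq|B_S(cn-\rho)|$, and at most $\lceil\rho+c\rceil$ iterations of Lemma~\ref{lemmaballs} to interpolate between the radii $cn-\rho$ and $c/\beta$. Your treatment of the small-radius regime, where you bound $|B_S(r)|\leq|S|^{\lceil\rho+c\rceil}\leq K|\Omega|$ directly so that $\Phi_S[K|\Omega|]\geq c/\beta$, is if anything a slightly cleaner version of the paper's second case.
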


\begin{proof} We begin with the proof of the first implication. Let $n\in\mathbb N$. If $\folner(n)=\infty$, there is nothing to prove. If $\folner(n)<\infty$, we choose $\Omega\subset G$ which realizes $\folner(n)$. More precisely, $\Omega$ is a finite non-empty subset of $G$ such that $\folner(n)=|\Omega|$ and such that
\[
\frac{|\partial\Omega|}{|\Omega|}\leq\frac{1}{n}.
\]
Applying the hypothesis of the first implication we thus obtain:
\[
c\frac{1}{\Phi[(1+\alpha)|\Omega|]}\leq\frac{|\partial\Omega|}{|\Omega|}\leq\frac{1}{n}.	
\]
Hence:
$$\Phi[(1+\alpha)|\Omega|]\geq cn.$$
Eventually, the definition of $\Phi$ and then the equality $\folner(n)=|\Omega|$ show that for any $\rho>0$, 
$$|B(cn-\rho)|\leq (1+\alpha)|\Omega|=(1+\alpha)\folner(n).$$
This ends the proof of the first implication. 

In order to prove the second implication,
we start with the special case when $\partial\Omega=\emptyset$. In other words, $\Omega=G$. As $S$ is non-empty by hypothesis, $|S|^{\left \lceil{\rho+c}\right \rceil}(1+\alpha)\geq 1$ and as the infimum of the empty set is infinite, $\Phi[|S|^{\left \lceil{\rho+c}\right \rceil}(1+\alpha)|\Omega|]=\infty$. Hence
	\[
	\frac{|\partial\Omega|}{|\Omega|}=0\geq c\frac{1}{\infty}=c\frac{1}{\Phi[|S|^{\left \lceil{\rho+c}\right \rceil}(1+\alpha)|\Omega|]}.	
	\] 
We now assume $\partial\Omega\neq\emptyset$. Let $n$ be the unique integer such that
$$\frac{1}{n}\geq\frac{|\partial\Omega|}{|\Omega|}>\frac{1}{n+1}.$$
By definition of the F{\o}lner function, $|\Omega|\geq\folner(n)$. 
We consider two cases. First case: $cn-\rho\geq 1$. Hence $r=cn-\rho+1\geq 2$ so that Lemma \ref{lemmaspheres} repeatedly applies and implies the inequality
\[
|B(cn-\rho)|\geq|S|^{-\left \lceil{\rho+c}\right \rceil}|B(cn+c)|.	
\]
Applying the hypothesis, we conclude that
\[
|S|^{\left \lceil{\rho+c}\right \rceil}(1+\alpha)|\Omega|\geq |S|^{\left \lceil{\rho+c}\right \rceil}(1+\alpha)\folner(n)\geq |S|^{\left \lceil{\rho+c}\right \rceil}|B(cn-\rho)|\geq|B(cn+c)|.	
\]
This implies that
\[
\Phi[|S|^{\left \lceil{\rho+c}\right \rceil}(1+\alpha)|\Omega|]\geq cn+c,
\]
and we conclude that
\[
\frac{|\partial\Omega|}{|\Omega|}>\frac{1}{n+1}\geq  c\frac{1}{\Phi_S\left[|S|^{\left \lceil{\rho+c}\right \rceil}(1+\alpha)|\Omega|\right]}.
\]
Second case: $cn-\rho<1$. Hence,
\[
	\frac{|\partial\Omega|}{|\Omega|}>\frac{1}{n+1}>\frac{c}{\rho+c}.
\]
But $\rho+c\leq\Phi[|S|^{\left \lceil{\rho+c}\right \rceil}(1+\alpha)|\Omega|]$ because
$|B(\rho+c)|\leq|S|^{\left \lceil{\rho+c}\right \rceil}\leq|S|^{\left \lceil{\rho+c}\right \rceil}(1+\alpha)|\Omega|$. This finishes the proof of the second case of the second implication.
\end{proof}

\subsection{Proof of Proposition \ref{equal} and Theorem \ref{quotient}: in order to know $C(G,S)$, compute $F(G,S)$.}\label{proof of proposition equal}
We define the two sets
\[
\mathcal C(G,S)=\left\{c\geq 0:\exists \alpha\geq 0\, \mbox{such that}\,\forall\Omega\subset G,\frac{|\partial_S\Omega|}{|\Omega|}\geq c\frac{1}{\Phi_S[(1+\alpha)|\Omega|]}\right\},	
\]
where $\Omega$ is always assumed finite and non-empty,
and
\[
\mathcal F(G,S)=\left\{c\geq 0:\exists \alpha\geq 0, \exists \rho\geq 0\, \mbox{such that}\,\forall n\in\mathbb N,\folner_S(n)\geq\frac{|B_S(cn-\rho)|}{1+\alpha}\right\}.	
\]

We prove Proposition \ref{equal}.
\begin{proof}
The sets $\mathcal C(G,S)$ and $\mathcal F(G,S)$ both contain $0$.
Assume $c>0$ belongs to 	$\mathcal C(G,S)$. Then, according to the first implication of Proposition 
\ref{CS equivalent to Folner}, it also belongs to $\mathcal F(G,S)$. This proves that 
\[
	\sup\mathcal F(G,S)\geq \sup\mathcal C(G,S).
\]
If $G$ is finite (in particular if $G=\{e\}$) it is easy to check that $C(G,S)$ and $F(G,S)$ are both infinite. Hence we may assume $S\neq\emptyset$ so that the second implication of Proposition 
\ref{CS equivalent to Folner} applies. Assume $c>0$ belongs to 	$\mathcal F(G,S)$. Then, according to the second implication of Proposition 
\ref{CS equivalent to Folner} it also belongs to $\mathcal C(G,S)$. This proves that 
\[
	\sup\mathcal C(G,S)\geq \sup\mathcal F(G,S).
\]
\end{proof}
We prove Theorem \ref{quotient}.
\begin{proof} According to Proposition \ref{equal}, it is enough to prove that
\[
\frac{\liminf_{n\to\infty}\frac{\ln(\folner_S(n))}{n}}{\lim_{n\to\infty}\frac{\ln\left(|B_S(n)|\right)}{n}}=F(G,S).	
\]
Let $c\in \mathcal F(G,S)$. We claim that
\[
	\frac{\liminf_{n\to\infty}\frac{\ln(\folner_S(n))}{n}}{\lim_{n\to\infty}\frac{\ln\left(|B_S(n)|\right)}{n}}\geq c.
\]
Notice that the claim implies the inequality
\[
	\frac{\liminf_{n\to\infty}\frac{\ln(\folner_S(n))}{n}}{\lim_{n\to\infty}\frac{\ln\left(|B_S(n)|\right)}{n}}\geq F(G,S).
\]
We prove the claim. In the case $c=0$, the inequality of the claim is obvious because $\folner_S(n)\geq 1$. Hence we may assume $c>0$. By definition of 
$\mathcal F(G,S)$, there exist $\alpha\geq 0, \rho\geq 0$ such that for all $n\in\mathbb N$,
\[
	\folner_S(n)\geq\frac{|B_S(cn-\rho)|}{1+\alpha}.
\]
Hence,
\[
	\frac{\ln\left(\folner_S(n)\right)}{n}\geq -\frac{\ln(1+\alpha)}{n}+c\frac{\ln\left(|B_S(cn-\rho)|\right)}{cn}.
\]
Applying Lemma \ref{lemmaballs} and taking limits ends the proof of the claim. 
Now we claim that if a constant $c\geq 0$ satisfies
\[
	\frac{\liminf_{n\to\infty}\frac{\ln(\folner_S(n))}{n}}{\lim_{n\to\infty}\frac{\ln\left(|B_S(n)|\right)}{n}}>c,
\]
then $c\in \mathcal F(G,S)$. 
Notice that the claim implies the inequality
\[
	\frac{\liminf_{n\to\infty}\frac{\ln(\folner_S(n))}{n}}{\lim_{n\to\infty}\frac{\ln\left(|B_S(n)|\right)}{n}}\leq F(G,S).
\]
We prove the claim. As $0\in \mathcal F(G,S)$, we may assume 
\[
	\frac{\liminf_{n\to\infty}\frac{\ln(\folner_S(n))}{n}}{\lim_{n\to\infty}\frac{\ln\left(|B_S(n)|\right)}{n}}>c>0.
\]
Let $N_0$ such that for any $n\geq N_0$,
\[
	\frac{\frac{\ln\left(\folner_S(n)\right)}{n}}{\frac{\ln\left(|B_S(cn)|\right)}{cn}}\geq c,
\]
equivalently, 
$$\folner_S(n)\geq|B_S(cn)|.$$
By choosing $\alpha\geq 0$  large enough, we obtain:
\[
	\forall n\in\mathbb N,\, \folner_S(n)\geq\frac{|B_S(cn)|}{1+\alpha}. 
\]
Hence, $c\in \mathcal F(G,S)$.
\end{proof}

\begin{bibdiv}
\begin{biblist}
	
\bib{BPS}{article}{
   author={Bendikov, Alexander},
   author={Pittet, Christophe},
   author={Sauer, Roman},
   title={Spectral distribution and $L^2$-isoperimetric profile of Laplace
   operators on groups},
   journal={Math. Ann.},
   volume={354},
   date={2012},
   number={1},
   pages={43--72},
   issn={0025-5831},
   doi={10.1007/s00208-011-0724-6},
}

\bib{BriZheng}{article}{
   author={Brieussel, J\'{e}r\'{e}mie},
   author={Zheng, Tianyi},
   title={Speed of random walks, isoperimetry and compression of finitely
   generated groups},
   journal={Ann. of Math. (2)},
   volume={193},
   date={2021},
   number={1},
   pages={1--105},
   issn={0003-486X},
   doi={10.4007/annals.2021.193.1.1},
}
	
\bib{Cor}{article}{
 author={Correia, Bruno Luiz Santos},
 title={On the isoperimetric inequality of Coulhon and Saloff-Coste}
 journal={arXiv:2002.06996},
 date={2020},
}

\bib{CorTro}{article}{
 author={Correia, Bruno Luiz Santos},
 author={Troyanov, Marc},
 title={On the isoperimetric inequality in finitely generated groups}
 journal={arXiv:2110.15798},
 date={2021},
}

\bib{CouSal}{article}{
   author={Coulhon, Thierry},
   author={Saloff-Coste, Laurent},
   title={Isop\'erim\'etrie pour les groupes et les vari\'et\'es},
   language={French},
   journal={Rev. Mat. Iberoamericana},
   volume={9},
   date={1993},
   number={2},
   pages={293--314},
   doi={https://doi.org/10.4171/RMI/138}, 
}

\bib{ErsZhe}{article}{
 author={Erschler, Anna},
  author={Zheng, Tianyi},
   title={Isoperimetric inequalities, shapes of F{\o}lner sets and groups with Shalom's  property $H_{\mathrm{FD}}$},
   language={English, with English and French summaries},
   journal={Ann. Inst. Fourier (Grenoble)},
   volume={70},
   date={2020},
   number={4},
   pages={1363--1402},
   doi={https://doi.org/10.5802/aif.3360},
}

\bib{Fol}{article}{
   author={F\o lner, Erling},
   title={On groups with full Banach mean value},
   journal={Math. Scand.},
   volume={3},
   date={1955},
   pages={243--254},
   issn={0025-5521},
   doi={10.7146/math.scand.a-10442},
}

\bib{Gro}{book}{
   author={Gromov, Misha},
   title={Metric structures for Riemannian and non-Riemannian spaces},
   series={Modern Birkh\"{a}user Classics},
   edition={Reprint of the 2001 English edition},
   note={Based on the 1981 French original;
   With appendices by M. Katz, P. Pansu and S. Semmes;
   Translated from the French by Sean Michael Bates},
   publisher={Birkh\"{a}user Boston, Inc., Boston, MA},
   date={2007},
   pages={xx+585},
   isbn={978-0-8176-4582-3},
   isbn={0-8176-4582-9},
   doi={https://doi.org/10.1007/978-0-8176-4583-0},
}

\bib{KesFull}{article}{
   author={Kesten, Harry},
   title={Full Banach mean values on countable groups},
   journal={Math. Scand.},
   volume={7},
   date={1959},
   pages={146--156},
   issn={0025-5521},
   doi={10.7146/math.scand.a-10568},
}

\bib{KesCong}{article}{
   author={Kesten, H.},
   title={The Martin boundary of recurrent random walks on countable groups},
   conference={
      title={Proc. Fifth Berkeley Sympos. Math. Statist. and Probability},
      address={Berkeley, Calif.},
      date={1965/66},
   },
   book={
      publisher={Univ. California Press, Berkeley, Calif.},
   },
   date={1967},
   pages={51--74},
}

\bib{Klei}{article}{
   author={Kleiner, Bruce},
   title={A new proof of Gromov's theorem on groups of polynomial growth},
   journal={J. Amer. Math. Soc.},
   volume={23},
   date={2010},
   number={3},
   pages={815--829},
   issn={0894-0347},
   doi={10.1090/S0894-0347-09-00658-4},
}

\bib{Man}{book}{
   author={Ma\~{n}\'{e}, Ricardo},
   title={Ergodic theory and differentiable dynamics},
   series={Ergebnisse der Mathematik und ihrer Grenzgebiete (3) [Results in
   Mathematics and Related Areas (3)]},
   volume={8},
   note={Translated from the Portuguese by Silvio Levy},
   publisher={Springer-Verlag, Berlin},
   date={1987},
   pages={xii+317},
   isbn={3-540-15278-4},
   doi={10.1007/978-3-642-70335-5},
}

\bib{Pet}{article}{
 author={Pete, Gabor},
 title={Probability and Geometry on Groups},
 date={2020},
}

\bib{PitSal}{article}{
   author={Pittet, Christophe},
   author={Saloff-Coste, Laurent},
   title={Amenable groups, isoperimetric profiles and random walks},
   conference={
      title={Geometric group theory down under},
      address={Canberra},
      date={1996},
   },
   book={
      publisher={de Gruyter, Berlin},
   },
   date={1999},
   pages={293--316},
}

\bib{PitSalWreath}{article}{
   author={Pittet, C.},
   author={Saloff-Coste, L.},
   title={On random walks on wreath products},
   journal={Ann. Probab.},
   volume={30},
   date={2002},
   number={2},
   pages={948--977},
   issn={0091-1798},
   doi={10.1214/aop/1023481013},
}

\bib{Pol}{article}{
   author={P\'{o}lya, Georg},
   title={\"{U}ber eine Aufgabe der Wahrscheinlichkeitsrechnung betreffend die
   Irrfahrt im Stra\ss ennetz},
   language={German},
   journal={Math. Ann.},
   volume={84},
   date={1921},
   number={1-2},
   pages={149--160},
   issn={0025-5831},
   doi={10.1007/BF01458701},
}

\bib{Stan}{article}{
 author={Stankov, Bogdan},
 title={Exact descriptions of F{\o}lner functions and sets on wreath products and Baumslag-Solitar groups},
 journal={arXiv:2111.09158},
 date={2021},
}

\bib{Varo}{article}{
   author={Varopoulos, Nicholas Th.},
   title={Th\'{e}orie du potentiel sur des groupes et des vari\'{e}t\'{e}s},
   language={French, with English summary},
   journal={C. R. Acad. Sci. Paris S\'{e}r. I Math.},
   volume={302},
   date={1986},
   number={6},
   pages={203--205},
   issn={0249-6291},
}

\bib{CSV}{book}{
   author={Varopoulos, N. Th.},
   author={Saloff-Coste, L.},
   author={Coulhon, T.},
   title={Analysis and geometry on groups},
   series={Cambridge Tracts in Mathematics},
   volume={100},
   publisher={Cambridge University Press, Cambridge},
   date={1992},
   pages={xii+156},
   isbn={0-521-35382-3},
}

\end{biblist}
\end{bibdiv}

\end{document}